\documentclass[12 pt,twoside]{amsart}
\usepackage{amsmath}
\usepackage{amssymb}
\usepackage{hyperref}
 \usepackage{amsthm}
 \usepackage{parskip}

\newcommand{\ds}{\displaystyle}
\newcommand{\R}{\mathbb{R}}
\newcommand{\N}{\mathbb{N}}
\newcommand{\C}{\mathbb{C}}
\newcommand{\Z}{\mathbb{Z}}

\newcommand{\clb}{\color{blue}}

\newcommand{\norm}[1]{\left \lVert #1 \right \rVert}
\newcommand{\be}{\begin{equation}}
\newcommand{\ee}{\end{equation}}
\newcommand{\bes}{\begin{equation*}}
\newcommand{\ees}{\end{equation*}}

\newcommand{\ve}{\varepsilon}

\newcommand{\ov}{\overline}

\newcommand{\lam}{\lambda}

\newcommand{\ben}{\begin{enumerate}}
\newcommand{\een}{\end{enumerate}}

\newcommand{\bpf}{\begin{proof}}
\newcommand{\epf}{\end{proof}}

\newcommand{\bsi}{\begin{itemize}[label={{\clb\ding{51}}}]}
\newcommand{\ei}{\end{itemize}}
\newcommand{\bci}{\begin{itemize}[label={{\clb\ding{43}}}]}

\newcommand{\bdeg}{\text{deg}}
\renewcommand{\phi}{\varphi}
\newcommand{\seqn}[1]{(#1)_{n\in \N}}
\numberwithin{equation}{section}
\newtheorem{theorem}{Theorem}[section]

\newtheorem{proposition}[theorem]{Proposition}
\newtheorem{lemma}[theorem]{Lemma}
\theoremstyle{definition}

\theoremstyle{remark}
\newtheorem{remark}[theorem]{\bf Remark}

\begin{document}
\setlength{\parskip}{.15cm}
\title[On the solvability of DNLS]{On the solvability of the discrete nonlinear Schr\"{o}dinger equation with subcubic potential}

\address{Daniel Maroncelli \newline
Department of Mathematics, College of Charleston \newline
66 George Street, Charleston, SC 29424 USA}

\author{Daniel Maroncelli}
% corresponding author
\email{maroncellidm@cofc.edu}

\begin{abstract}
In this paper, we analyze the solvability of the discrete nonlinear Schr\"odinger equation
\bes
i\beta(\Delta_t+\nabla_t)\phi(t,k)
+\gamma |\phi(t,k)|^2\phi(t,k)
+\ve \Delta_k^2\phi(t,k-1)
=
g(t,\phi(t,k)),
\ees
where $\Delta_t$ and $\Delta_k$ denote the standard forward difference operators in the variables $t$ and $k$, respectively, $\nabla_t$ denotes the standard backward difference operator in $t$, {and}
\bes
\Delta_k^2\phi(t,k-1)
=
\phi(t,k+1)-2\phi(t,k)+\phi(t,k-1)
\ees
is the discrete Laplacian operator in the spatial variable $k$. Throughout, we will assume the parameters $\beta$ and $\ve$ are positive real numbers, the parameter $\gamma$ is a nonzero real number, and the potential function $g:\Z\times\C\to \C$ is continuous.
\end{abstract}

\keywords{
\textbf{Schr\"{o}dinger's equation, superlinear potentials, subcubic potentials, discrete boundary value problems, periodic solutions}
}

\maketitle

\section{Introduction}

In this short work, we analyze the solvability of the discrete nonlinear  Schr\"{o}dinger equation (DNLS)
%\be\label{dschro}\footnotesize
%i\beta(\phi(t+1,k)-\phi(t-1,k)) +\gamma|\phi(t,k)|^2\phi(t,k)+\ve(\phi(t,k+1)-2\phi(t,k)+\phi(t,k-1))=g(\phi(t,k)).
%\ee
\be\label{dschro}
i\beta(\Delta_t+\nabla_t)\phi(t,k) +\gamma|\phi(t,k)|^2\phi(t,k)+\ve\Delta^2_k\phi(t,k-1)=g(t,\phi(t,k)),
\ee
posed on a periodic lattice $\Z_T\times \Z_K$, $T,K\in \N$ with $T\geq 1$ and $K\geq 2$. In particular, we establish the existence of $(T,K)$-periodic solutions to \eqref{dschro} under a very mild growth assumption on the nonlinear potential $g$.  Here $i$ denotes the complex imaginary unit, $(t,k)\in \Z\times \Z$, the parameters $\beta$ and $\ve$ are positive real numbers, the parameter $\gamma$ is a nonzero real number, $\Delta_t$ and $\Delta_k$ are the standard forward difference operators in the variables $t$ and $k$,  respectively, $\nabla_t$ is the standard backwards difference operator in the variable $t$, and the potential function $g:\Z\times \C\to \C$ is continuous.

In the continuous one-dimensional setting, the nonlinear Schr\"{o}dinger equation (NLS) with nonlinear potential is commonly given by
\be\label{cschro}
i\phi_t+\alpha|\phi|^2\phi+\phi_{xx}=g(\cdot, \phi),
\ee
where $\alpha$ is some nonzero real constant.
The literature on the NLS equation is vast. Applications arise in nonlinear optics (where $\phi$ represents a slowly varying envelope of an electromagnetic wave), in water waves and Bose–Einstein condensates (where $\phi$ is a wave function), and in many other areas of nonlinear dispersive wave theory. Mathematically, the model exhibits a very rich combination of dispersive smoothing and nonlinear focusing/defocusing effects, which makes it of interest to mathematicians and physicists alike.

{The discrete nonlinear Schr\"odinger equation (DNLS) considered here, equation $\eqref{dschro}$, arises as a discrete analogue of the one-dimensional continuous nonlinear Schr\"odinger equation $\eqref{cschro}$, obtained by approximating spatial and temporal derivatives with finite difference operators on a uniform lattice. In this way, the DNLS serves as a natural discretization of the continuous model, in which differential operators are replaced by forward and backward difference quotients.}

The existing literature on DNLS is also vast, where it has been examined in both mathematical and physical contexts.  For general theory, see \cite{AblowitzLadik,HennigTsironis,Kevrekidis}. {For recent work regarding the Cauchy initial value problem with general initial data, see} \cite{Vuoksenmaa2024}.  For work with particular emphasis on spatially localized solutions such as breathers, standing waves, and discrete solitons, see \cite{Brunhuber,FlachWillis,Flach2003,Zhang2009} and the references therein. Existence results for periodic and standing wave solutions have been obtained using a variety of methods, see \cite{Bak2010,ZhangMa} for results of this type. Topological degree and fixed point techniques remain fundamental tools in the analysis of nonlinear discrete systems; for results that are similar in nature to this work, we suggest \cite{BereanuMawhin, MaroncelliDiscreteBounded, marrod,MaroncelliMult,Rouche}.

In contrast to much of the existing literature on DNLS, which relies on variational methods or spectral analysis of linearized operators, we adopt a finite-dimensional operator-theoretic framework combined with topological degree theory to establish existence of periodic solutions. We reformulate \eqref{dschro} as a nonlinear operator equation on a Banach space of lattice-periodic functions and exploit compactness properties inherent in the finite-dimensional setting. This approach yields existence results under mild growth conditions on the nonlinear forcing term, without requiring coercivity or a variational structure.  {In this regard, our results complement those works cited above, but extend the class of nonlinearities for which existence can be obtained and provide an alternative theoretical framework based on topological degree methods in a finite-dimensional setting.}

This work is organized as follows. In section \ref{Main},  we prove the existence of periodic solutions to \eqref{dschro} under a very mild subcubic growth condition placed on $g$, that is, when $g$ satisfies $\lim_{|z|\to \infty}\frac{|g(t, z)|}{|z|^3}=0$ for every $t\in \Z$.   {It is important note that the subcubic growth condition imposed here is considerably weaker than the standard assumptions used in topological degree arguments, which typically require sublinear growth of the nonlinearity.} After proving the existence of periodic solutions in this general setting, we point out how our result produces periodic steady-state solutions, which are of interest in applications to the long-term dynamics of the DNLS system. We finish by giving a few concluding remarks.

\section{Main Results}\label{Main}
\subsection{Periodic Solutions}

In this section, we prove the existence of periodic solutions to \eqref{dschro}, under the assumption that $g$ is periodic in its first component. We intend to study \eqref{dschro} as an operator equation defined between Banach spaces.   To facilitate this aim, let us start by introducing some terminology.

%For $n,m\in \N$, with $n\geq m$,  let $[m,n]=\{m,m+1,\cdots, n\}$.  
Fix $T, K\in \N$, with $T\geq 1$ and $K\geq 2$, and define
\bes
X_{T,K}=\{\phi:\Z\times \Z\to \C\mid \phi \text{ is } (T,K)\text{-periodic}\},
\ees
where, as usual, $\Z$ denotes the integers.

\begin{remark}
In this setting, by $(T,K)$-periodic we mean a function $u:\Z\times \Z\to \C$ such that $u(t+T,k)=u(t,k)$  and $u(t, k+K)=u(t,k)$ for all $t,k\in \Z$.
\end{remark}

We make $X_{T,K}$ a Banach space by equipping it with the supremum norm; that is, for $\phi\in X_{T,K}$, we set
\bes
\norm{\phi}=\sup_{(t,k)\in \Z\times \Z}|\phi(t,k)|,
\ees
where $|\cdot|$ denotes the Euclidean norm of $\C$. That $\norm{\phi} <\infty$ for all $\phi\in X_{T,K}$ follows easily from the periodicity requirement.  It also is easy to see that the following holds:

\begin{lemma}
The space $X_{T,K}$ is isomorphic to $\C^{TK}$, in particular, it is a finite-dimensional normed space, and, thus, a Banach space.
\end{lemma}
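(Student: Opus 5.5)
The plan is to write down an explicit linear bijection between $X_{T,K}$ and $\C^{TK}$. Then we use the standard fact that every finite-dimensional normed space is complete.

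First, $X_{T,K}$ is a complex vector space under pointwise addition and scalar multiplication. Sums and scalar multiples of $(T,K)$-periodic functions are again $(T,K)$-periodic, so it is a linear subspace of the space of all functions $\Z\times\Z\to\C$. Define
\bes
\Phi:X_{T,K}\to \C^{TK},\qquad \Phi(\phi)=\bigl(\phi(t,k)\bigr)_{0\le t\le T-1,\ 0\le k\le K-1},
\ees
with the $TK$ entries listed in any fixed order, say lexicographic. $\Phi$ is clearly linear.

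For injectivity, every $(t,k)\in\Z\times\Z$ can be written as $t=t_0+mT$, $k=k_0+nK$ with $0\le t_0\le T-1$, $0\le k_0\le K-1$ and $m,n\in\Z$ (Euclidean division). Iterating the periodicity relations in the Remark gives $\phi(t,k)=\phi(t_0,k_0)$. So if $\Phi(\phi)=0$, then $\phi\equiv 0$.

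For surjectivity, given any vector $(a_{t_0,k_0})\in\C^{TK}$, define $\phi(t,k)=a_{t \bmod T,\;k\bmod K}$. This $\phi$ is $(T,K)$-periodic and satisfies $\Phi(\phi)=(a_{t_0,k_0})$. Hence $\Phi$ is a linear isomorphism and $\dim_\C X_{T,K}=TK$.

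The same reduction shows
\bes
\norm{\phi}=\max_{0\le t_0<T,\,0\le k_0<K}|\phi(t_0,k_0)|,
\ees
a maximum over finitely many values. So the supremum norm is finite and equals the $\ell^\infty$ norm of $\Phi(\phi)$. Thus $\Phi$ is in fact an isometric isomorphism onto $(\C^{TK},\|\cdot\|_\infty)$. Completeness then follows either from the completeness of $(\C^{TK},\|\cdot\|_\infty)$, which reduces coordinatewise to the completeness of $\C$, or from the general theorem that all norms on a finite-dimensional space are equivalent and such spaces are complete.

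None of these steps is a real obstacle. The only point needing care is the periodic reduction $\phi(t,k)=\phi(t_0,k_0)$, which uses periodicity in each variable separately and induction on $|m|$ and $|n|$, including negative shifts.
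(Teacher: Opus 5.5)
Your proof is correct and follows the same route as the paper: both identify $\phi$ with its values on $\{0,\dots,T-1\}\times\{0,\dots,K-1\}$ via the coordinate map into $\C^{TK}$. You also fill in details the paper leaves implicit. These are injectivity and surjectivity via Euclidean division, and the observation that this map is an isometry onto $(\C^{TK},\|\cdot\|_\infty)$, which gives completeness directly.
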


\begin{proof}
Each $\phi\in X_{T,K}$ is uniquely determined by its values on the finite set
\bes
\{0,\dots,T-1\}\times \{0,\dots,K-1\},
\ees
so the map
\bes
\phi\mapsto (\phi(t,k))_{0\leq t\leq T-1,\;0\leq k\leq K-1}
\ees
defines an isomorphism from $X_{T,K}$ onto $\C^{TK}$.
\end{proof}
 
To study equation $\eqref{dschro}$ in operator form, we introduce the following difference operators on $X_{T,K}$. We define the forward and backward time difference operators $\Delta_t,\nabla_t : X_{T,K}\to X_{T,K}$ by
\be
\begin{aligned}
(\Delta_t \phi)(t,k) &= \phi(t+1,k)-\phi(t,k), \\
(\nabla_t \phi)(t,k) &= \phi(t,k)-\phi(t-1,k),
\end{aligned}
\ee
and the forward spatial difference operator $\Delta_k : X_{T,K}\to X_{T,K}$ by
\begin{equation}
(\Delta_k \phi)(t,k) = \phi(t,k+1)-\phi(t,k).
\end{equation}
We now define operators
 $L\colon X_{T,K}\to X_{T,K}$  by
\be\label{linop}
(L\phi)(t,k)=i\beta(\Delta_t+\nabla_t)(\phi)(t,k)+\ve(\Delta^2_k\phi)(t,k-1),
\ee
and
 $F\colon X_{T,K}\to X_{T,K}$  by
\be\label{cubicop}
(F(\phi))(t,k)=-\gamma|\phi(t,k)|^2\phi(t,k).
\ee
\begin{remark}
I would like to point out that
\bes
\Delta_k^2\phi(t,k-1)
=
\phi(t,k+1)-2\phi(t,k)+\phi(t,k-1)=\Delta\phi(t,k),
\ees
where $\Delta$ denotes the discrete Laplacian operator in the spacial variable $k$.  I have chosen to forgo the Laplacian notation in this setting so that no confusion may arise with the standard notation for forward difference operators.
\end{remark}

For the remainder of this work, we will assume that $g$ is $T$-periodic in its first component; that is, if $g(t+T,x)=g(t,x)$ for all $(t,x)\in \Z\times \C$.  Under this assumption, we may also define 
 $G\colon X_{T,K}\to X_{T,K}$  by
\be\label{nemop}
(G(\phi))(t,k)=g(t,\phi(t,k)).
\ee

The linear operator $L$ captures time differences and spatial coupling at nodes of the $\Z\times \Z$ lattice, the nonlinear operator 
$F$ represents the intrinsic local cubic nonlinearity, and the forcing operator $G$ represents externally applied or prescribed forces, such as time-dependent external driving forces. Finding a $(T,K)$-periodic solution to equation \eqref{dschro} on $\Z\times \Z$ is now equivalent to solving the operator equation 
\be\label{banschro}
L\phi=F(\phi)+G(\phi).
\ee

Since $L$ is a linear operator and $X_{T,K}$ is finite-dimensional Banach space,  $L$ is continuous.  Both $F$ and $G$ are also continuous.
\begin{theorem}
The operators $L$, $F$, and $G$ (see \eqref{linop}, \eqref{cubicop}, and \eqref{nemop}) are continuous. 
\end{theorem}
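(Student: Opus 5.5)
The plan is to handle the three operators separately. Throughout we use the earlier Lemma, which identifies $X_{T,K}$ with $\C^{TK}$ through the values on $\{0,\dots,T-1\}\times\{0,\dots,K-1\}$. By periodicity, the supremum norm equals the maximum of $|\phi(t,k)|$ over this finite set.

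\emph{Well-definedness.} First we check that each operator maps $X_{T,K}$ into itself. For $L$, shifts in $t$ by $\pm1$ and in $k$ by $\pm1$ preserve $(T,K)$-periodicity, and $L$ is a linear combination of such shifts. For $F$, the map acts pointwise, so periodicity is immediate. For $G$, we use the standing assumption $g(t+T,x)=g(t,x)$, which gives
\bes
G(\phi)(t+T,k)=g(t+T,\phi(t+T,k))=g(t,\phi(t,k)),
\ees
and $K$-periodicity in $k$ holds because $g$ does not depend on $k$.

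\emph{Continuity of $L$.} Linearity is clear. In fact the bound
\bes
\norm{L\phi}\le (2\beta+4\ve)\norm{\phi}
\ees
follows directly from the triangle inequality, since $(\Delta_t+\nabla_t)\phi(t,k)=\phi(t+1,k)-\phi(t-1,k)$. So $L$ is bounded, and alternatively any linear map on a finite-dimensional normed space is continuous.

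\emph{Continuity of $F$ and $G$.} Take $\phi_n\to\phi$ in $X_{T,K}$. Then $\phi_n(t,k)\to\phi(t,k)$ uniformly in $(t,k)$, and in particular at each of the finitely many representative points $(t,k)$ with $0\le t\le T-1$, $0\le k\le K-1$. The map $z\mapsto -\gamma|z|^2z$ is continuous on $\C$, and for each fixed $t$ the map $z\mapsto g(t,z)$ is continuous. Hence $F(\phi_n)(t,k)\to F(\phi)(t,k)$ and $G(\phi_n)(t,k)\to G(\phi)(t,k)$ at each representative point. Since there are finitely many such points, the maximum of the differences tends to zero. By periodicity this maximum equals the supremum norm, so $\norm{F(\phi_n)-F(\phi)}\to0$ and $\norm{G(\phi_n)-G(\phi)}\to0$.

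\emph{Main point of care.} The only step that needs attention is reducing the supremum over all of $\Z\times\Z$ to a maximum over the finite fundamental domain. This reduction uses the $T$-periodicity of $g$ in $t$, which is what makes $G(\phi)$ lie in $X_{T,K}$ in the first place. Once this is in place, continuity of $g$ in its second variable at finitely many points is all that is needed, and no uniform continuity of $g$ is required.
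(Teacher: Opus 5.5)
Your proof is correct, but for $F$ and $G$ it takes a different route from the paper. The paper argues via uniform continuity. A convergent sequence $(\phi_n)$ is uniformly bounded by some $M$, and $h(z)=|z|^2z$ is uniformly continuous on the compact disk $\{|z|\le M\}$. Hence $\norm{\phi_n-\phi}<\delta$ makes $|h(\phi_n(t,k))-h(\phi(t,k))|$ small at every lattice point simultaneously. The paper dismisses $G$ as ``similar'' and handles $L$ by finite-dimensionality, as in your alternative remark.

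You instead use periodicity to reduce the supremum to a maximum over $TK$ representative points. Then you only need continuity of $z\mapsto g(t,z)$ for each fixed $t$.

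The two approaches buy different things:
\begin{itemize}
\item The paper's argument for $F$ never uses finiteness of the lattice. It would work verbatim for bounded functions on all of $\Z\times\Z$, the infinite-lattice setting raised in the concluding remarks. Yours is tied to the periodic, finite-dimensional setting.
\item Your version makes explicit a point hidden in the paper's ``similar.'' Running the uniform-continuity argument for $G$ requires $g(t,\cdot)$ to be uniformly continuous on $\{|z|\le M\}$ uniformly in $t\in\Z$. Here that holds because $T$-periodicity leaves only finitely many distinct functions $g(t,\cdot)$; continuity of $g$ on $\Z\times\C$ alone would not give it.
\end{itemize}

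Your well-definedness check and the explicit bound $\norm{L}\le 2\beta+4\ve$ are worthwhile additions. The bound gives a concrete admissible choice of $s>\norm{L}$ in the proof of Theorem~\ref{dnlsperiodic}.
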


\begin{proof}
As mentioned already, $L$ is a linear mapping on a finite-dimensional Banach space and so is continuous from a basic fact about linear operators on finite-dimensional topological vector spaces. See \cite{ConwayCA} for reference.

To see that $F$ is continuous, let $h:\C\to \C$ be defined by $h(z)=|z|^2z$.  Certainly $h$ is continuous.  Assume $\seqn{\phi_n}$ is a sequence from $X_{T,K}$ with $\phi_n\to \phi$, for some $\phi\in X_{T,K}$.  Since the convergence is uniform, the collection $\{\phi, \phi_1, \phi_2, \dots\}$ is uniformly bounded.

Let $\phi_0$ denote $\phi$ and choose $M>0$ such that for all $n\in \N_0:=\{0, 1, \dots\}$, $\norm{\phi_n}\leq M$. Let $B_M=\{z\in \C\mid |z|\leq M\}$.  Since $h$ is continuous and $B_M$ is compact, $h$ is uniformly continuous on $B_M$.

Let $\ve>0$. By the uniform continuity of $h$ on $B_M$, we may choose a $\delta>0$ such that if $z, w\in B_M$, with $|z-w|<\delta$, then $|h(z)-h(w)|<\ve$.  Since $\phi_n\to \phi=\phi_0$, we may choose an $N\in \N$ such that if $n\geq N$, then $\norm{\phi_n-\phi_0}<\delta$.  Of course it then follows that if $n\geq N$, we have $\norm{h(\phi_n)-h(\phi_0)}<\ve$.  Thus, $h(\phi_n)\to h(\phi_0)=h(\phi)$ in $X_{T,K}$. However, if $\psi\in X_{T,K}$, then $F(\psi)=-\gamma h(\psi)$ and so we have proved that $F$ is continuous.  

The argument for the continuity of $G$ is similar and so will not be repeated.
\end{proof}

\subsubsection{Brouwer Degree Theory} 
We now come to our main result on the existence of periodic solutions to \eqref{dschro}. We aim to prove the existence of solutions to \eqref{dschro} using a Brouwer degree argument.  For the convenience of the reader, we list those properties of Brouwer degree that are needed in our proof.  

Suppose that $n\in \N$ and that $f:\ov{\Omega}\subset \R^n\to\R^n$, where $\Omega$ is open and bounded.  Suppose further that $f$ is continuous with $f(y)\neq 0$ for all $y\in \partial \Omega$.  As is usual, $\ov{\Omega}$ denotes the closure of $\Omega$, and $\partial \Omega$ its boundary. Under the above assumptions,  there exists a degree function, $\bdeg(f,\Omega, 0)$, with the following properties:
\ben
\item[(a)] $\bdeg(f,\Omega,0)$ is integer-valued;
\item[(b)] if $\bdeg(f,\Omega,0)\neq 0$, then $f(x)=0$ for some $x\in\Omega$;
%\item if $A$ is an invertible matrix and $0\in\Omega$, then $\bdeg(A,\Omega,0)=\text{sgn}(\text{det}(A))$;
\item[(c)] ({\em Homotopy Invariance})  if $H:\ov{\Omega}\times[0,1]\to\R^n$ is continuous, with $H(y,t)\neq0$ for all $y\in \partial \Omega$ and every $t\in[0,1]$, then $\bdeg(H(\cdot,t),\Omega,0)$ is independent of $t\in [0,1]$; 
\item[(d)] ({\em Borsuk's Theorem}) if $\Omega$ is a symmetric neighborhood of $0$, $f$ is odd, and $f(y)\neq 0$ for all $y\in \partial \Omega$,  then $\bdeg(f,\Omega, 0)$ is an odd integer. 
\een

Fixed point arguments and their close relatives, namely Brouwer and Leray-Schauder topological degree theory, have been used extensively over the past several decades to establish existence results for differential and difference equations. For readers interested in a more comprehensive discussion of Brouwer degree, or more generally Leray-Schauder degree, we suggest \cite{Deimling,Rouche}.

\begin{theorem}\label{dnlsperiodic}
Suppose that $T,K\in \N$, $T\geq 1$ and $K\geq 2$. Suppose that $g$ is $T$-periodic in the variable $t$.  If $\lim_{|z|\to \infty} \frac{g(t,z)}{|z|^3}=0$, for each $t\in \Z$, then \eqref{dschro} has a $(T,K)$-periodic solution.
\end{theorem}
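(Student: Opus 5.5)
We argue by Brouwer degree and homotopy invariance, deforming the problem to the odd map $\phi\mapsto L\phi-F(\phi)$ and then applying Borsuk's Theorem. Identify $X_{T,K}$ with $\C^{TK}\cong\R^{2TK}$ via the earlier Lemma. For $\lambda\in[0,1]$ set
\bes
H(\phi,\lambda)=L\phi-F(\phi)-\lambda G(\phi),
\ees
which is jointly continuous by the continuity of $L$, $F$, $G$ established above. Zeros of $H(\cdot,1)$ are exactly the $(T,K)$-periodic solutions of \eqref{dschro}, by the equivalence with \eqref{banschro}. Meanwhile $H(\cdot,0)=L-F$ is odd, since $L$ is linear and $F(-\phi)=-F(\phi)$. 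If we find $R>0$ with $H(\phi,\lambda)\neq 0$ for all $\norm{\phi}=R$ and all $\lambda\in[0,1]$, the argument closes. With $\Omega$ the open ball of radius $R$, which is symmetric about $0$, Borsuk's Theorem (d) gives that $\bdeg(H(\cdot,0),\Omega,0)$ is odd. Homotopy invariance (c) then gives the same value at $\lambda=1$, which is nonzero, and property (b) yields a solution.

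The a priori bound is the heart of the matter. Suppose $L\phi=F(\phi)+\lambda G(\phi)$. Choose $(t_0,k_0)$ in the finite fundamental domain with $|\phi(t_0,k_0)|=\norm{\phi}=:M$, and evaluate the equation there. The left side satisfies
\bes
|(L\phi)(t_0,k_0)|=|i\beta(\phi(t_0+1,k_0)-\phi(t_0-1,k_0))+\ve(\phi(t_0,k_0+1)-2\phi(t_0,k_0)+\phi(t_0,k_0-1))|\le (2\beta+4\ve)M,
\ees
noting $(\Delta_t+\nabla_t)\phi(t,k)=\phi(t+1,k)-\phi(t-1,k)$. The right side satisfies
\bes
|F(\phi)(t_0,k_0)+\lambda G(\phi)(t_0,k_0)|\ge |\gamma|M^3-|g(t_0,\phi(t_0,k_0))|.
\ees

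Next we make the growth hypothesis uniform. Since $g$ is $T$-periodic in $t$, only $t\in\{0,\dots,T-1\}$ matter. For each such $t$ there is $r_t$ with $|g(t,z)|\le \tfrac{|\gamma|}{2}|z|^3$ when $|z|\ge r_t$. Put $r=\max_t r_t$. Then at the maximizing point, if $M\ge r$,
\bes
\tfrac{|\gamma|}{2}M^3\le(2\beta+4\ve)M,
\ees
so $M\le \max\{r,\sqrt{2(2\beta+4\ve)/|\gamma|}\}$. Taking $R$ larger than this bound ensures $H(\phi,\lambda)\neq 0$ on $\partial\Omega$ for every $\lambda$. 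The bound is independent of $\lambda$, because $\lambda\le 1$ only shrinks the $g$ term.

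The main obstacle is this a priori estimate, and the pointwise maximum trick above is what resolves it. It works because at a point where $|\phi|$ is maximal the linear part is controlled linearly by $M$, while the cubic term dominates. The subcubic hypothesis guarantees $g$ cannot cancel the cubic term at that point. Nothing about the sign of $\gamma$ or the structure of $L$ (for instance its invertibility) is needed, since Borsuk's theorem handles the odd endpoint regardless.
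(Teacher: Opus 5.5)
Your argument is correct, and it takes a more direct route than the paper's.

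The paper does not homotope $L-F-\lambda G$ itself. It picks $s>\norm{L}$, sets $A=L-sI$, and rewrites the equation in fixed-point form $\phi=A^{-1}F(\phi)+A^{-1}(G(\phi)-s\phi)$. It applies Borsuk to the odd map $S=I-A^{-1}F$. It then deforms $S$ to $Q=S-A^{-1}(G-sI)$ using a Rouch\'e-type comparison $\norm{A^{-1}(G(\phi)-s\phi)}<\norm{\phi-A^{-1}F(\phi)}$ on a large sphere. That comparison is proved with global operator-norm estimates: the lower bound $\norm{A^{-1}\psi}\geq\norm{\psi}/\norm{A}$, the threshold $|g(t,z)|<\frac{|\gamma|}{2\norm{A}\norm{A^{-1}}}|z|^3$, and an additive constant $\sup_{|z|\leq R^*}|g|$. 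Undoing $A^{-1}$, the paper's homotopy is $L-(1-\lambda)sI-F-\lambda G$. So it moves the linear part while switching on $G$, whereas yours keeps $L$ fixed throughout.

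Your pointwise estimate at a maximizing lattice point is cleaner. There $|F(\phi)|=|\gamma|\norm{\phi}^3$ exactly, and the growth hypothesis applies directly with $|z|=\norm{\phi}$. No additive constant or inverse-operator norms enter, and you get an explicit a priori bound $\max\{r,\sqrt{2(2\beta+4\ve)/|\gamma|}\}$ for every zero of the whole family.

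What the paper's identity-minus-perturbation form buys is mainly that it is the Leray--Schauder template. That is the tool its concluding remarks point to for infinite lattices. In the finite-dimensional setting of this theorem the spectral shift and inversion are unnecessary, and your version is the more economical proof.
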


\begin{proof}

We know that solving \eqref{dschro} is equivalent to solving \eqref{banschro}, that is, solving \eqref{dschro} is equivalent to solving 
\be\label{oper2}
L\phi=F(\phi)+G(\phi).
\ee
Let $\ds\norm{L}=\sup_{\psi\in X_{T,K}, \norm{\psi}=1}\norm{L\psi}$; that is, let $\norm{L}$ denote the operator norm of $L$.

Choose a real number $s$ with $s>\norm{L}$. Let $\sigma(L)$ denote the spectrum of $L$, that is, 
\bes
\sigma(L)=\{\lam\in \C\mid \lam I-L \text{ is not invertible}\}.
\ees
If $s>\norm{L}$, then it is well-known (from the theory of Banach Algebras) that $s\not\in \sigma(L)$. Thus,  $L-sI$ is an invertible linear mapping.  Let $A=L-sI$. Define {$P=A^{-1}F$} and $H=A^{-1}(G-sI)$. Solving \eqref{oper2} is now equivalent to solving 
\be
\phi=P(\phi)+H(\phi).
\ee

Set $S=I-P$.  Note that since $F$ is odd, so is $S$.  If $R>0$ is large, then $S \phi\neq 0$ whenever $\norm{\phi}=R$.  Indeed, if $S\phi=0$, then {$\phi=P\phi$}, or, equivalently, $A\phi=F\phi$.  However, 
\bes
\norm{F(\phi)}=|\gamma|\norm{\phi}^3=|\gamma|R^3
\ees
 and 
\bes\norm{A\phi}\leq (\norm{L}+s)\norm{\phi}=(\norm{L}+s)R,
\ees
so that equality of $F(\phi)$ and $A\phi$ is impossible when $\phi$ has large norm.
By Borsuk's theorem on Brouwer degree (see (d) above), $\bdeg(S,B_R,0)$ is an odd integer for large $R$, where $B_R=\{\psi\in X_{T,K}\mid \norm{\psi}<R\}$.  

Let $Q=S-H$.  Note that $\phi$ solves \eqref{dschro} if and only if $Q(\phi)=0$.  Our goal is to show that $\bdeg(Q,B_R,0)=\deg(S,B_R,0)$, for some appropriately chosen ball $B_R$.  To do this, we will show that 
\bes
\norm{S(\phi)-Q(\phi)}<\norm{S(\phi)}
\ees
or, equivalently,
\be\label{E:Hom}
\norm{A^{-1}(G(\phi)-s\phi)}=\norm{H(\phi)}<\norm{S(\phi)}=\norm{\phi-A^{-1}F(\phi)}
\ee
for every $\phi\in X_{T,K}$ with $\norm{\phi}=R$, for this appropriately chosen $R$.  In this case, for $\phi\in X_{T,K}$ with $\norm{\phi}=R$, we must have that for any $t\in [0,1]$
\bes
\begin{split}
\norm{(1-t)S(\phi)+tQ(\phi)}&=\norm {t(Q(\phi)-S(\phi))+S(\phi)}\\&>\norm{S(\phi)}-t\norm{S(\phi)-Q(\phi)}\\&>0.
\end{split}
\ees
Thus, if we show \eqref{E:Hom}, the result will follow by the invariance of Brouwer degree under homotopy. 

By our assumption on $g$, we may choose $R^*>0$ such that if $|z|\geq R^*$, then for all $t\in \mathbb{Z}$,
\[
|g(t,z)|<\frac{|\gamma|}{2\|A\|\|A^{-1}\|}|z|^3.
\]
Indeed, for each fixed $t\in\mathbb{Z}$, the assumption
\[
\lim_{|z|\to\infty}\frac{g(t,z)}{|z|^3}=0
\]
implies the existence of a corresponding threshold $R_t>0$ with this property. Since $g$ is $T$-periodic in $t$, it suffices to take
\[
R^*=\max_{t\in\{0,1,\dots,T-1\}} R_t,
\]
which yields a uniform bound valid for all $t\in\mathbb{Z}$. 

 If we set $M=\sup_{\{(t,z)\in \Z\times \C, |z|\leq R^*\}}|g(t,z)|$, then it follows that for all $z\in \C$ and any $t\in \Z$, 
\bes
|g(t,z)|\leq M+\frac{|\gamma|}{2\norm{A}\norm{A^{-1}}}|z|^3.
\ees
We now easily deduce, by taking suprema,  that 
\be\label{Subop}
\norm{G(\phi)}\leq M+\frac{|\gamma|}{2\norm{A}\norm{A^{-1}}}\norm{\phi}^3
\ee
for every $\phi\in X_{T,K}$.  Since $X_{T,K}$ is finite-dimensional, $A^{-1}$ is a continuous operator.  As such, we conclude, with the help of \eqref{Subop}, that 
\be\label{sublin}
\norm{A^{-1}(G(\phi)-s\phi)}\leq \norm{A^{-1}}\norm{G(\phi)-s\phi}\leq\norm{A^{-1}}(\norm{G(\phi)}+s\norm{\phi})\leq B+C\norm{\phi}+D\norm{\phi}^3, 
\ee
 for every $\phi\in X_{T,K}$, where $B=\norm{A^{-1}}M$, $C=\norm{A^{-1}}s$, and  $D=\frac{|\gamma|}{2\norm{A}}$.

{By the continuity of $A$, we must have}
\bes
\norm{\psi}=\norm{AA^{-1}\psi}\leq \norm{A}\norm{A^{-1}\psi}
\ees 
or
\be\label{boundedbelow}
\dfrac{1}{\norm{A}}\norm{\psi}\leq \norm{A^{-1}\psi}
\ee
for each $\psi\in X_{T,K}$.  From \eqref{boundedbelow} we deduce, 
\be\label{suplin}
\begin{split}
\norm{\phi-A^{-1}F(\phi)}\geq \norm{A^{-1}F(\phi)}-\norm{\phi}&\geq\frac{1}{\norm{A}}\norm{F(\phi)}-\norm{\phi}\\&=\frac{|\gamma|}{\norm{A}}\norm{\phi}^3-\norm{\phi}\\
						  &=2D\norm{\phi}^3-\norm{\phi}
\end{split}
\ee
{for every $\phi\in X_{T,K}$, where we have used the fact that for each such $\phi$},
\bes
\norm{F(\phi)}=\sup_{(t,k)\in \Z\times \Z}|\gamma||\phi(t,k)|^3=\gamma\norm{\phi}^3.
\ees

Since {$\lim_{p\to\infty}\frac{2Dp^3-p}{B+Cp+Dp^3}=2$}, it follows from \eqref{sublin} and \eqref{suplin} that there exists an $R>0$ such that if $\phi\in X_{T,K}$ with $\norm{\phi}=R$, then 
\bes
\norm{A^{-1}(G(\phi)-s\phi)}<\norm{\phi-A^{-1}F(\phi)},
\ees
which verifies \eqref{E:Hom}. Based on our discussion following \eqref{E:Hom}, the result follows.
 \end{proof}

\subsection{Periodic steady-state solutions}
Theorem \ref{dnlsperiodic} can be used to deduce periodic steady-state solutions to \eqref{dschro}. By steady-state solutions, we mean solutions of the form
\bes
\phi(t,k)=u(k),
\ees
which are independent of the time variable $t$. In this case, the discrete time-difference terms vanish, and \eqref{dschro} ``reduces'' to a purely spatial nonlinear difference equation of the form
\be\label{steadydnls}
\varepsilon \Delta_k^2 u(k-1) + \gamma |u(k)|^2 u(k) = h(u(k)),
\ee
where $\ve>0$, $\gamma\in \R$, and $h:\C\to \C$ is continuous, when we assume our external driving force $g$ is independent of time.

Steady-state solutions play an important role in applications of the discrete nonlinear Schr\"odinger equation, as they correspond to time-independent wave profiles that persist under the dynamics of the system. In physical models, such solutions typically represent equilibria or standing wave configurations in lattices, and are often used to describe localized modes in nonlinear optical arrays, energy transport in coupled oscillator systems, and stationary states in Bose--Einstein condensates.

The following proposition is an immediate corollary of Theorem \ref{dnlsperiodic}. 
\begin{proposition}
Suppose that $K\in \N$, $K\geq 2$.  If $\lim_{|z|\to \infty} \frac{h(z)}{|z|^3}=0$, then \eqref{steadydnls} has a $K$-periodic solution.
\end{proposition}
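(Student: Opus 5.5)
We plan to deduce the proposition from Theorem \ref{dnlsperiodic} with time period $T=1$ and time-independent forcing $g(t,z)=h(z)$.

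\emph{Setup.} Define $g:\Z\times\C\to\C$ by $g(t,z)=h(z)$. Then $g$ is continuous. It is trivially $T$-periodic in $t$ for every $T$, in particular for $T=1$. Moreover, for each $t\in\Z$,
\[
\lim_{|z|\to\infty}\frac{g(t,z)}{|z|^3}=\lim_{|z|\to\infty}\frac{h(z)}{|z|^3}=0 .
\]
Hence the hypotheses of Theorem \ref{dnlsperiodic} hold with $T=1$ and the given $K\geq 2$. The theorem then yields a $(1,K)$-periodic function $\phi:\Z\times\Z\to\C$ solving \eqref{dschro}.

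\emph{Reduction to a steady state.} Since $\phi$ is $1$-periodic in $t$, we have $\phi(t+1,k)=\phi(t,k)$ for all $t,k$. Therefore $\phi(t,k)=\phi(0,k)$, and we set $u(k):=\phi(0,k)$. This $u$ is $K$-periodic. Consequently $(\Delta_t\phi)(t,k)=\phi(t+1,k)-\phi(t,k)=0$ and $(\nabla_t\phi)(t,k)=\phi(t,k)-\phi(t-1,k)=0$, so the term $i\beta(\Delta_t+\nabla_t)\phi$ vanishes identically. Substituting $\phi(t,k)=u(k)$ into \eqref{dschro} then gives
\[
\gamma|u(k)|^2u(k)+\ve\Delta_k^2u(k-1)=h(u(k)) \quad\text{for all } k\in\Z,
\]
which is exactly \eqref{steadydnls}. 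Thus $u$ is a $K$-periodic solution.

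\emph{Main point to check.} The only step needing care is that the case $T=1$ is genuinely admissible in Theorem \ref{dnlsperiodic}. The statement allows $T\geq1$, and the proof does not use $T\geq 2$ anywhere. With $T=1$, $X_{1,K}\cong\C^K$ and $L$ reduces to $\ve\Delta_k^2(\cdot)(k-1)$. The choice $s>\norm{L}$, the Borsuk argument, and the growth estimates all go through verbatim, and the uniform threshold $R^*$ is taken over the single value $t=0$. So no further obstacle arises, and the proposition follows directly.
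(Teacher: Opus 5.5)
Your proposal is correct and is exactly the paper's proof: set $g(t,z)=h(z)$ and apply Theorem~\ref{dnlsperiodic} with $T=1$. Your check that $1$-periodicity in $t$ makes $i\beta(\Delta_t+\nabla_t)\phi$ vanish spells out what the paper leaves implicit. Like the theorem, this relies on the standing assumption $\gamma\neq 0$, even though the steady-state section writes $\gamma\in\R$.
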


\begin{proof}
Set $g(t,z)=h(z)$, and apply Theorem \ref{dnlsperiodic} with $T=1$.
 \end{proof}

\medskip

\subsection{Example}
Part of the novelty of theorem \ref{dnlsperiodic} is that it is applicable for arbitrary periods $T\geq 1$, $K\geq 2$ and such a wide variety of nonlinearities $g$.  Additionally, the growth requirement on $g$ is rather mild compared with requirements typical when proving existence results in differential and difference equations using degree theory arguments, where $g$ is typically required to adhere to some sublinear growth condition. 

 It is not hard to think of functions $g$ which satisfy the requirement of Theorem \ref{dnlsperiodic} ($\lim_{|z|\to \infty} \frac{g(t,z)}{|z|^3}=0$, for each $t\in \Z$), so here we merely indicate those functions we had in mind when formulating the theorem.  If $g$ is of the form
\bes
g(t,z) = f(t)z^r,
\ees
where $0<r<3$ and $f$ is $T$-periodic,
then $g$ is certainly $T$-periodic in $t$ and satisfies the growth condition
\bes
\lim_{|z|\to\infty} \frac{g(t,z)}{|z|^3} = 0,
\ees
uniformly in $t$.
Thus, for such a $g$, Theorem \ref{dnlsperiodic} ensures a $(T,K)$-periodic solution to \eqref{dschro} for all $K\geq 2$.

\section*{Concluding Remarks}

The approach developed in this work is based on reducing the DNLS on a finite periodic lattice to a finite-dimensional operator equation and applying Brouwer degree theory. This framework is well-suited to $(T,K)$-periodic boundary conditions, where the underlying function space becomes finite-dimensional and compactness issues are avoided.

It is natural to ask how these methods extend to other boundary conditions, such as discrete Dirichlet, Neumann, and Robin conditions. For such cases, additional care is required since the discrete Laplacian may no longer act invariantly on a ``natural'' function space. 

Another important direction concerns the study of DNLS posed on infinite lattices $\mathbb{Z}\times\mathbb{Z}$ (without periodic conditions). In this setting, the problem becomes genuinely infinite-dimensional, and the finite-dimensional Brouwer degree is no longer applicable. One would instead need to employ infinite-dimensional extensions such as Leray-Schauder degree, fixed point index theory, or compactness methods in weighted sequence spaces. These extensions are significantly more delicate, as superlinear cubic growth in this setting is hard to tame.

\end{document}